\newtheorem{Theorem}{Theorem}
\newtheorem{Lemma}[Theorem]{Lemma}
\newcommand\mR{\mathbb R}
\newcommand\mE{\mathbb E}
\newcommand\ep\varepsilon
\newcommand{\raisemath}[1]{\mathpalette{\raisem@th{#1}}}
\newcommand{\raisem@th}[3]{\raisebox{#1}{$#2#3$}}
\date{} 
\title{A polarization identity for multilinear
maps\footnote{This elegant note belongs to the legacy of Erik Thomas,
who conceived and wrote it in 1997 after discussions on functional
integration with his former student J. Leo van Hemmen (TU M\"unchen).
It has been edited to some extent by the latter and Tom H. Koornwinder
(University of Amsterdam) but is essentially as it was when the author
passed away on 13~September, 2011. The result presented in this paper
has turned out to date back to Mazur and Orlicz \cite[Eq.~(22)]{mo34} but the proof
by Thomas is
far simpler and incomparably shorter, making its arguments attractive
and justifying a separate publication. The Appendix, presenting a further
simplification which has appeared first in \cite{bs71}, was written by Tom H.
Koorninder in 2013.}} 
\author{Erik G.F. Thomas\footnote{With an Appendix by Tom H. Koornwinder. Correspondence to: Korteweg-de Vries Institute, University of Amsterdam, P.O.\ Box 94248, 1090 GE
\mbox{Amsterdam}, The Netherlands.
e-mail: {\tt T.H.Koornwinder@uva.nl}.}\\[\medskipamount]
{\footnotesize\em Johann Bernoulli Institute for
Mathematics and Computer Science},\\
{\footnotesize\em University of \mbox{Groningen},
P.O.\ Box 407, 9700 AK Groningen, The Netherlands}}
\begin{document}
\maketitle

\begin{abstract}
Given linear spaces $E$ and $F$ over the real numbers or a field of
characteristic zero, a simple argument is given to represent a
symmetric multilinear map $u(x_{1}, x_{2}, \ldots, x_{n})$ from $E^n$
to $F$ in terms of its restriction to the diagonal. As an application,
a probabilistic expression for Gaussian variables used by Nelson and
by Schetzen is derived. An Appendix by Tom H. Koornwinder notes an
even further simplification by Bochnak and Siciak (1971) of the proof of
the main result.\end{abstract}

\section{Introduction}

A symmetric bilinear form $(x_1,x_2) \mapsto u(x_1,x_2)$ is well known
to be completely determined by the corresponding quadratic form $
\tilde u(x) = u(x,x).$ For instance, $u$ is determined by $\tilde u$
through any of the following formulas, known as polarization
identities,
\begin{subequations} 
\begin{align}  
u(x_1,x_2) &= \frac{1}{4} \left[ \tilde{u}(x_1+x_2) - 
\tilde{u}(x_1-x_2 )\right],
\label{eq:1a}\\
u(x_1,x_2) &= \frac{1}{2} \left[ \tilde{u}(x_1+x_2) - \tilde{u}(x_1) - 
\tilde{u}(x_2) \right].
\label{eq:1b}
\end{align}
\end{subequations}
Given linear spaces $E$ and $F$ over the real number field $\mR$ or,
more generally, over a field $K$ of characteristic zero, this paper
proposes a similar formula to express an $n$-linear map
$u:E^n\longrightarrow F$ which is symmetric, i.e., which is invariant
under all $n!$ permutations of the variables, in terms of its
restriction to the main diagonal
\begin{equation*}
\tilde u(x) = u(x,\ldots ,x).
\end{equation*}
A formula which accomplishes this, such as \eqref{eq:1a} or
\eqref{eq:1b}, will be called a {\sl polarization identity}. In the
works of E. Nelson \cite{nelson} and M. Schetzen
\cite{schetzen}, in different ways concerned with products of Gaussian
random variables, a general polarization identity is given, without
the ``obvious'' combinatorial proof. In the present paper we give a
proof based on simple properties of the shift operator.

\section{A polarization identity using operators}

We define two operators acting on functions $v:E\longrightarrow F$.
A difference operator $\Delta _h,$ depending on $h\in E,$ and Tr, the 
trace or value at the origin:
\begin{equation*}
\left( \Delta_h v \right)(x) = v(x+h) - v(x),\qquad
\text{\rm Tr} \;v = v(0).
\end{equation*}
\begin{Theorem}
Let $E$ and $F$ be linear spaces over a field $K$ of characteristic 
zero, and let $u:E^n\longrightarrow F$ be a symmetric $n$-linear map.
Then we have the polarization identity
\begin{equation} \label{eq:4}
 u(x_1,\ldots ,x_n) = \frac{1}{n!} \; \text{\rm Tr}\; \Delta 
_{x_n}\Delta_{x_{n-1}}\ldots \Delta _{x_1}\tilde u.
\end{equation} 
\end{Theorem}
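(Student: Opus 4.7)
The plan is to rewrite the right-hand side of \eqref{eq:4} as an explicit alternating sum over subsets of $\{1,\ldots,n\}$, and then to collapse it by a single inclusion--exclusion identity. The key observation driving everything is that $\Delta_h = T_h - I$ where $T_h v(x) := v(x+h)$ is the shift operator, and that the shifts commute and satisfy $T_aT_b=T_{a+b}$.

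First I would expand the product of difference operators by the binomial formula to obtain
\[
\Delta_{x_n}\cdots\Delta_{x_1} \;=\; \prod_{i=1}^n(T_{x_i}-I) \;=\; \sum_{S\subseteq\{1,\ldots,n\}}(-1)^{n-|S|}\,T_{\sum_{i\in S}x_i},
\]
apply this to $\tilde u$, and evaluate at $0$; this rewrites the right-hand side of \eqref{eq:4} as
\[
\frac{1}{n!}\sum_{S\subseteq\{1,\ldots,n\}}(-1)^{n-|S|}\,\tilde u\Bigl(\sum_{i\in S}x_i\Bigr).
\]

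Next, I would use the $n$-linearity of $u$ to expand each inner term as
\[
\tilde u\Bigl(\sum_{i\in S}x_i\Bigr) \;=\; \sum_{\phi\colon\{1,\ldots,n\}\to S} u(x_{\phi(1)},\ldots,x_{\phi(n)}),
\]
substitute, and interchange the order of summation. Regrouping by the function $\phi\colon\{1,\ldots,n\}\to\{1,\ldots,n\}$, the coefficient of $u(x_{\phi(1)},\ldots,x_{\phi(n)})$ becomes the inner sum $\sum_{S\supseteq\phi(\{1,\ldots,n\})}(-1)^{n-|S|}$.

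The heart of the argument is the evaluation of this coefficient: writing $k=|\phi(\{1,\ldots,n\})|$ and summing over subsets of the complement reduces it to $(1-1)^{n-k}$, which vanishes unless $k=n$, i.e., unless $\phi$ is a bijection. For each of the $n!$ surviving permutations, symmetry of $u$ forces $u(x_{\phi(1)},\ldots,x_{\phi(n)})=u(x_1,\ldots,x_n)$, so the total is $n!\,u(x_1,\ldots,x_n)$, and dividing by $n!$ gives \eqref{eq:4}. The main obstacle is keeping the two nested expansions aligned cleanly so that the interchange of summations is transparent; the combinatorial cancellation $(1-1)^{n-k}=0$ for $k<n$ is the only nontrivial content, and everything else is routine bookkeeping with the defining properties of $T_h$, $\Delta_h$, multilinearity, and symmetry.
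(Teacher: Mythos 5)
Your proof is correct, but it takes a genuinely different route from the one in Section~2 of the paper. The paper first establishes the derivative identity $n!\,u(x_1,\ldots,x_n)=\partial^n\tilde u(t_1x_1+\cdots+t_nx_n)$ (by the product rule over $\mR$, or by extracting the coefficient of $t_1\cdots t_n$ over a general field), and then converts the $n$ partial derivatives into the $n$ difference operators by integrating from $0$ to $1$ (or, field-independently, by the Lemma on polynomials with constant derivative); the expansion $\prod_i\Delta_{x_i}=\sum_S(-1)^{n-|S|}\sigma_{S_J}$ appears only later, in Section~3, to make the formula explicit. You instead start from that expansion and finish with a purely combinatorial argument: expand $\tilde u\bigl(\sum_{i\in S}x_i\bigr)$ by multilinearity over maps $\phi\colon\{1,\ldots,n\}\to S$, interchange sums, and kill every non-surjective $\phi$ via $\sum_{S\supseteq\mathrm{im}\,\phi}(-1)^{n-|S|}=(1-1)^{n-|\mathrm{im}\,\phi|}$. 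This is essentially the Bochnak--Siciak proof sketched in the paper's Appendix (your sum over subsets $S$ is their sum over $(\ep_1,\ldots,\ep_n)\in\{0,1\}^n$, and your vanishing coefficient is their ``killed by $\sum_{\ep_j=0}^1(-1)^{n-\ep_j}$''), specialized to $x_0=0$. What your route buys: it needs no calculus, no reduction to finite-dimensional $E$ and $F$, and no auxiliary lemma, and it visibly proves the integer identity $n!\,u=\mathrm{Tr}\,\Delta_{x_n}\cdots\Delta_{x_1}\tilde u$ over an arbitrary field before the division by $n!$. What the paper's route buys: the intermediate identity \eqref{eq:7} holds identically in $t_1,\ldots,t_n$, directly yields the uniqueness statement for symmetric multilinear maps agreeing on the diagonal, and explains the differential-calculus motivation behind \eqref{eq:4}.
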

For $n=2$ this is the formula \eqref{eq:1b}. For $n=3$ we obtain
\begin{eqnarray*}
u(x_1,x_2,x_3) = \frac{1}{6} \left[ \tilde{u}(x_1 + x_2 + x_3) 
-\tilde{u}(x_1 + x_2) - \tilde{u}(x_2 + x_3) - \tilde{u}(x_3 + x_1) 
\right.
\\
\left.
+ \tilde{u}(x_1) + \tilde{u}(x_2) + \tilde{u}(x_3) \right].
\end{eqnarray*}
The general case will be worked out below.
\begin{proof} 
We shall prove quite generally the following relation, which in the 
case of characteristic zero is equivalent to \eqref{eq:4},
\begin{equation} \label{eq:5}
n!\,u(x_1,\ldots ,x_n) = \text{\rm Tr}\; \Delta 
_{x_n}\Delta_{x_{n-1}}\ldots\Delta _{x_1}\tilde u.
\end{equation}
It is sufficient to prove this in the case of finite dimensional
spaces $E$ and $F$, since we can replace $E$ by the subspace spanned
by $x_1, \ldots , x_n$, and then $F$ by the linear span of the image
$u(E^n)$. In the remainder of the proof we assume that the spaces $E$
and $F$ are finite dimensional. In the case of the real number field
($K=\mR$), the following formula, which of course is the motivation
behind \eqref{eq:4}, is well known:
\begin{equation} \label{eq:6}
u(x_1,\ldots ,x_n) = \frac{1}{n!} {\frac{\partial^n}{\partial 
t_1\partial t_2 \ldots \partial t_n}}\,\tilde u(t_1x_1+\cdots+ t_n x_n).
\end{equation}
The foregoing expression holds, not just at $(t_1,\ldots ,t_n) = 0$
but identically, in $t_1,t_2,\ldots ,t_n \in \mR$. To prove this in the
case of the real number field $\mR $ one can make use of the `product
rule' for differentiation: if $\varphi _1,\ldots ,\varphi _n$ are
differentiable functions on $\mR $ with values in $E$, one has
\begin{multline*}
\frac d{dt}u(\varphi _1(t),\ldots ,\varphi _n(t))\\
=u(\varphi _1'(t),\varphi _2(t),\ldots ,\varphi _n(t)) + \cdots
   + u(\varphi _1(t),\ldots,\varphi _{n-1}(t), \varphi _n'(t)).
\end{multline*}
Differentiating $\tilde u(t_1x_1+\ldots +t_nx_n)$ successively with
respect to $t_1,t_2,\ldots ,t_n$ one obtains formula \eqref{eq:6}. 

To prove \eqref{eq:6} for a general field $K$ of arbitrary
characteristic, we use the abbreviation $\partial ^n = \partial
^n / \partial t_1\partial t_2\ldots \partial t_n$. It is clear that 
for every monomial $t^\alpha = t_1^{\alpha _1}t_2^{\alpha _2}\ldots
t_n^{\alpha _n}$, with $\alpha _1+\cdots +\alpha _n = n$, we have
$\partial ^nt^\alpha = 0$, except for $t_1t_2\ldots t_n$ where we have
$\partial ^nt_1\ldots t_n = 1$. Therefore $\partial ^n\tilde
u(t_1x_1+\cdots +t_nx_n)$ equals the coefficient of the monomial
$t_1\ldots t_n$ in the development of $\tilde u(t_1x_1+\cdots
+t_nx_n)$. This is the sum of the term $u(x_1,\ldots ,x_n)$ and of
other similar terms obtained by permuting the $x_i$. Since there are
$n!$ such permutations, and $u$ is symmetric, we have
\begin{equation} \label{eq:7}
n!\, u(x_1,\ldots ,x_n) = \frac{\partial ^n}{\partial t_1\partial t_2
\ldots \partial t_n} \tilde u(t_1x_1+\cdots +t_nx_n).
\end{equation}
In the case of characteristic zero this is equivalent to \eqref{eq:6}.
Note that this already implies the uniqueness result: {\sl Two 
symmetric $n$-linear forms $u$ and $v$ on $E$ are equal if and only 
if $u(x,\ldots ,x) = v(x,\ldots ,x)$ for all $x\in E$}.
In the case of the real number field we obtain formula \eqref{eq:5} 
by integrating the expression \eqref{eq:7} $n$ times between $0$ and 
$1$:
\begin{align}
n!\,u(x_1,\ldots ,x_n) &=\int _0^1 \frac\partial{\partial 
t_1}\frac{\partial^{n-1}}{\partial t_2\ldots\partial t_n}\tilde 
u(t_1x_1+t_2x_2+\cdots +t_nx_n)dt_1 \nonumber\\
&=\frac{\partial^{n-1}}{\partial t_2\ldots\partial t_n}\big(\tilde 
u(x_1+t_2x_2+\cdots+ t_nx_n)-\tilde u(t_2x_2+\cdots +t_nx_n)\big) 
\nonumber \\
&=\frac{\partial^{n-1}}{\partial t_2\ldots\partial t_n}\Delta 
_{x_1}\tilde u(t_2x_2+\cdots +t_nx_n) \nonumber \\
&=\frac{\partial^{n-1}}{\partial t_3\ldots\partial t_n}\Delta 
_{x_2}\Delta _{x_1}\tilde u(t_3x_3+\cdots+ t_nx_n) =\ldots \nonumber\\
&=\frac\partial{\partial t_n}\big(\Delta _{x_{n-1}}\ldots 
\Delta_{x_1}\tilde u\big)(t_nx_n) \nonumber\\
&=\big(\Delta _{x_n}\Delta _{x_{n-1}}\ldots \Delta _{x_1}\tilde 
u\big)(0) \,. \label{eq:8} 
\end{align}
This completes the proof.\end{proof}
In the above proof we have not used the Riemann integral in an essential
way. Instead of integrating we can use the following lemma. 
\begin{Lemma}
Let $P$ be a polynomial in one variable. Then, if $P' = c$ is 
constant, $c = P(1)-P(0)$.
\end{Lemma}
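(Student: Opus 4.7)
The plan is to unpack what it means for a polynomial $P$ to have constant derivative, and then compute $P(1)-P(0)$ directly from the resulting explicit form of $P$.

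First I would write $P$ in coordinates as $P(t)=a_0+a_1t+a_2t^2+\cdots+a_dt^d$ so that $P'(t)=a_1+2a_2t+3a_3t^2+\cdots+da_dt^{d-1}$. The assumption $P'=c$ forces the constant term $a_1=c$ and the higher coefficients $ka_k=0$ for $k=2,\ldots,d$. Here I use that $K$ has characteristic zero (the standing assumption of the paper), so the integers $k=2,\ldots,d$ are invertible and the equations $ka_k=0$ yield $a_k=0$. Consequently $P(t)=a_0+ct$.

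From this explicit form the conclusion is immediate: $P(1)-P(0)=(a_0+c)-a_0=c$, as required.

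There is no real obstacle; the only point that deserves attention is the appeal to characteristic zero in passing from $ka_k=0$ to $a_k=0$, which is exactly the hypothesis under which the theorem above is stated. The lemma is then precisely the algebraic substitute for the fundamental theorem of calculus that lets one replicate the telescoping computation \eqref{eq:8} in the proof of the theorem without invoking the Riemann integral: one applies it with $P(t)=\partial^{n-1}\tilde u(tx_1+t_2x_2+\cdots+t_nx_n)/\partial t_2\cdots\partial t_n$, viewed as a polynomial in $t$, whose derivative in $t$ is the required constant by \eqref{eq:7}, and similarly at each subsequent step.
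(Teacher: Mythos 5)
Your proof is essentially identical to the paper's: expand $P$ in coefficients, observe that $P'=c$ forces $a_1=c$ and $ka_k=0$ for $k\ge 2$, conclude $P(t)=a_0+ct$ and hence $P(1)-P(0)=c$. You are in fact slightly more careful than the paper, which passes silently from $ka_k=0$ to $a_k=0$ even though it later invokes the lemma over an arbitrary field (where that step, and indeed the lemma itself, fails, e.g.\ for $P(t)=t^p$ in characteristic $p$); your explicit appeal to characteristic zero correctly flags the hypothesis under which the argument is valid.
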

\begin{proof} 
$P(t) = a_0+a_1t+\cdots +a_nt^n,$ with $a_i \in F$.
Then $P'(t) = a_1+2a_2t +\cdots +na_nt^{n-1} = c$.
Thus $a_1 = c$ and $a_2=\ldots =a_n = 0$.
Therefore $P(t) = a_0+ct$ and $P(1)-P(0) = c$. 
\end{proof}
By applying the lemma $n$ times it follows that \eqref{eq:8}, with the
right-hand side of the first line removed, is still valid in the case
of an arbitrary field. In conclusion, formula \eqref{eq:5} is valid
quite generally, i.e., for linear spaces $E$ and $F$ over an arbitrary
field and a symmetric $n$-linear map $u:E^n\longrightarrow F$. In
particular this proves the theorem. It also proves that the theorem is
false if the field has a positive characteristic dividing $n$.

\section{The formula worked out explicitly}

In the work of E. Nelson \cite{nelson} and M. Schetzen \cite{schetzen}
a formula is given, (3) resp. (5.4-10), for the product of $n$
numbers, which suggests the following polarization
formula:
\begin{equation}
n!\,u(x_1,\ldots,x_n)=\sum_{k=1}^n(-1)^{n-k}\sum_{J;\,|J|=k}\tilde u(S_J).
\label{eq:9}
\end{equation}
Here the inner sum on the right-hand side runs over subsets
$J$ of  $\{1,2,\ldots ,n\}$ for which $|J|$ (the number of elements in $J$)
is equal to $k$. We have also used the abbreviation
\begin{equation} \label{eq:12}
S_J = \sum_{i\in J}x_i \,.
\end{equation}

We assert that formula \eqref{eq:9} is just \eqref{eq:5} developed
explicitly. For the proof we introduce the shift operator $\sigma _h$
defined for functions $v: E \longrightarrow F$ by
\begin{equation*}
(\sigma _hv)(x) = v(x+h).
\end{equation*}
Thus $\Delta _h = \sigma _h - I,$ where $I = \sigma _0$ is the 
identity.
Note the rule:
\begin{equation} \label{eq:11}
\sigma_h\sigma _k = \sigma _{h+k}, \qquad h,k \in E.
\end{equation}
Then, since the operators $\sigma _{x_i}$ commute, we have 
\begin{equation}
\prod_{i=1}^n \Delta _{x_i} = \prod_{i=1}^n \big(\sigma _{x_i}-I\big) 
= \sum_J (-1)^{n-|J|}\prod_{i\in J} \sigma_{x_i}\,.
\label{eq:13}
\end{equation}
Ordering the sum by the number $k = |J|$ of elements in $J$ and 
taking into account \eqref{eq:11} we obtain
\begin{equation*}
\prod_{i=1}^n \Delta _{x_i} = \sum _{k=0}^n(-1)^{n-k} \sum
_{|J|=k}\sigma_{S_J}\ .
\end{equation*}
Let both sides of this operator identity act on  $\tilde u$:
\begin{equation}
\left(\prod_{i=1}^n \Delta _{x_i} \tilde u\right)(x)=
 \sum _{k=0}^n(-1)^{n-k}\sum _{|J|=k}\left(\sigma_{S_J}\tilde u\right)(x).
 \label{eq:17}
\end{equation}
Then by putting $x=0$  and by applying \eqref{eq:5} we obtain \eqref{eq:9}.
%
%
This proves the assertion. In conclusion, formula \eqref{eq:9} like
formula \eqref{eq:5} is valid for arbitrary fields. In particular, in
the case of a field of characteristic zero, this gives a polarization
formula as in \eqref{eq:4} or as in \eqref{eq:9} after division by
$n!\,$. \\[\bigskipamount] {\bf Remark}\quad Nelson \cite{nelson}
stated the formula not just for numbers, but for `commuting
indeterminates', for instance elements in a commutative algebra
$\mathcal{A}$. In this generality the formula
\begin{equation*}
   a_1a_2\ldots a_n = \frac1{n!} \sum _{k=0}^n (-1)^{n-k}\sum
   _{i_1<\ldots <i_k}(a_{i_1}+\cdots +a_{i_k})^n
\end{equation*}
is a consequence of formula \eqref{eq:9}, if we take $E = F = 
\mathcal{A},$ and $u(a_1,\ldots ,a_n) = a_1\ldots a_n.$

\section{Gaussian variables}
The polarization formula can be used, as was done in Nelson's paper
\cite{nelson}, to prove the formula for the expectation of a product
of Gaussian stochastic variables: if $x_1,\ldots ,x_n$ are stochastic
variables on some probability space, with a joint distribution which
is centered Gaussian, the expectation $\mE(x_1\ldots x_n)$ of the
product equals zero if $n$ is odd, the joint distribution being
invariant under the symmetry $(x_1,\ldots ,x_n) \mapsto
(-x_1,-x_2,\ldots ,-x_n)$ . But if $n$ is even we have
\begin{equation} \label{eq:16}
\mE(x_1\ldots x_n) = \sum \prod \mE(x_ix_j).
\end{equation}
In this sum of products each product is associated to a partition of 
the set $\{1,\ldots,n\}$ into sets of two elements, the product 
ranging over all the sets $\{i,j\}$ of a given partition. 
The sum is then taken for all such partitions. 
For instance 
\begin{equation*}
\mE(x_1x_2x_3x_4) = \mE(x_1x_2) 
\mE(x_3x_4)+\mE(x_1x_3)E(x_2x_4)+\mE(x_1x_4)\mE(x_2x_3).
\end{equation*}
Now it is clear that both sides of equation \eqref{eq:16} represent
symmetric $n$-linear forms on the space $E$ spanned by the random
variables $x_i$. The joint distribution being Gaussian, all the
elements in $E$ have Gaussian distributions. Therefore, just by the
uniqueness property, or formula \eqref{eq:5}, it follows that it is
enough to prove the formula in the case $x_1=x_2=\ldots =x_n = x,$
where by the homogeneity we may assume $\mE(x^2) = 1$, i.e.,
\[
\mE(x^n)= \int_{-\infty}^\infty
x^n e^{-\frac12 x^2}\,dx\Big/ \int_{-\infty}^\infty x^2 e^{-\frac12
x^2}\,dx.
\]
Formula \eqref{eq:16} then
reduces to
\begin{equation*}
\mE(x^n) = \text{number of partitions of } \{1,\ldots ,n\} \text{ into 
sets of two elements}  \ .
\end{equation*}
But this is true, because both sides are easily seen to be equal to
the product $(n-1)(n-3)\ldots (1)$. In Wiener's lectures
\cite{wiener58}, the case where the $x_i$ are equal was treated in
Lecture~1, and the general case was handled in Lecture 2, Eq.~(2.6).
Wiener's claim that the general case follows from the special case
(``From Lecture~1, it follows that\ldots'') is therefore quite
justified. 
Wiener has also given more details of his proof elsewhere
\cite[pp.~538--539]{wiener76}. 

\section{Question} 
Formulas \eqref{eq:1a} and \eqref{eq:1b} show that polarization
identities, with $n$ given, are not unique but formula \eqref{eq:1b}
has a symmetry property not shared by \eqref{eq:1a}, namely for any
function $\tilde u$ the expression on the right-hand side is seen to
be symmetric, just by the commutativity of the addition. This is also
true for the polarization identity \eqref{eq:9}. Does this symmetry
property characterize the formula uniquely?

\numberwithin{equation}{section}
\appendix
\section{Appendix (by Tom H. Koornwinder)}
As pointed out in the footnote on the first page, formula \eqref{eq:9}
was first obtained by Mazur \& Orlicz \cite[(22)]{mo34}. They gave the
formula in the equivalent form
\begin{equation}
n!\,u(x_1,\ldots,x_n)=\sum_{\ep_1,\ldots,\ep_n=0}^1(-1)^{n-(\ep_1+\cdots\ep_n)}
\tilde u(\ep_1 x_1+\cdots+\ep_n x_n).
\label{K5}
\end{equation}
In fact, a more general formula than \eqref{eq:9} was given
by Bochnak \& Siciak \cite[(1)]{bs71},
\begin{equation}
n!\,u(x_1,\ldots,x_n)=\sum_{\ep_1,\ldots,\ep_n=0}^1(-1)^{n-(\ep_1+\cdots\ep_n)}
\tilde u(x_0+\ep_1 x_1+\cdots+\ep_n x_n)
\label{K1}
\end{equation}
with $x_0$ arbitrary. The authors \cite{bs71} attributed formula
(\ref{K1}) to Mazur and Orlicz \cite{mo34}, but I could not find it 
there. Note that \eqref{K1} can be equivalently written in the style of
\eqref{eq:9} as
\begin{equation}
n!\,u(x_1,\ldots,x_n)
=\sum_{k=0}^n(-1)^{n-k}\sum_{J;\,|J|=k}\tilde u(x_0+S_J).
\label{K2}
\end{equation}

Formula \eqref{K2} can be proved by slight adaptations of the
arguments leading above to \eqref{eq:9}. Just note that in
\eqref{eq:5} $\Delta _{x_n}\Delta_{x_{n-1}}\ldots\Delta _{x_1}\tilde
u$ is a constant function, by which it can be evaluated at any $x$ and
not necessarily at 0 following the definition of Tr. Thus
\eqref{eq:17} can be used for $x=x_0$.

When we put $x_0:=-\tfrac12(x_1+\cdots+x_n)$ (not allowing a field of
characteristic 2) in \eqref{K1} and use the homogeneity of $\tilde u$,
then we obtain
\begin{equation}
2^n n!\,u(x_1,\ldots,x_n)
=\sum_{\ep_1,\ldots,\ep_n=0}^1(-1)^{\ep_1+\cdots\ep_n}
\tilde u\big((-1)^{\ep_1} x_1+\cdots+(-1)^{\ep_n} x_n\big).
\label{K3}
\end{equation}
For $n=2$ this is \eqref{eq:1a} together with the fact that $\tilde u$
is an even function in this case. Note that \eqref{K2} and \eqref{K3}
have right-hand sides which are symmetric in $x_1,\ldots,x_n$, thus
giving a partial answer to the question in Section~5.

While the proof of \eqref{eq:9} as given by Thomas in Sections 2 and 3
is already a big advance as compared to the proof in \cite{mo34},
the proof of \eqref{K1} (and therefore of \eqref{K5} and
\eqref{eq:9}) as given in \cite{bs71} is extremely simple. Here is a sketch.
We start by expanding
\begin{multline*}
\sum_{\ep_1,\ldots,\ep_n=0}^1(-1)^{n-(\ep_1+\cdots\ep_n)}
\tilde u(\ep_0 x_0+\ep_1 x_1+\cdots+\ep_n x_n)\\
=\sum_{\ep_1,\ldots,\ep_n=0}^1(-1)^{n-(\ep_1+\cdots\ep_n)}
\sum_{i_1,\ldots,i_n=0}^n
u(\ep_{i_1}x_{i_1},\ldots,\ep_{i_n}x_{i_n}) \ .
\end{multline*}
Now any term in the inner sum on the right where some
$j\in\{1,\ldots,n\}$ is missing in $i_1,\ldots,i_n$ will be killed by
$\sum_{\ep_j=0}^1(-1)^{n-\ep_j}$. Hence, the inner sum on the right
only runs over permutations $i_1,\ldots,i_n$ of $1,\ldots,n$. Now use
symmetry of $u$ and the fact that $u$ vanishes as soon as one of its
arguments equals 0. Accordingly the right-hand side equals 
$n!\,u(x_1,\ldots,x_n)$. Thus, by putting $\ep_0=1$, we have proven
\eqref{K1}.

Formula \eqref{eq:9} has some resemblance to the inclusion-exclusion
principle; see, for instance, \cite[Theorem 1.6.1]{bala}. Let $A$ be a
finite set with subsets $A_1,\ldots,A_n$ and let $A_j'$ denote the
complement of $A_j$ in $A$. Then the inclusion-exclusion principle can
be formulated as
\begin{equation}
|A_1'\cap\ldots\cap A_n'|=\sum_{k=0}^n (-1)^k
\sum_{1\le i_1<i_2<\ldots<i_k\le n} |A_{i_1}\cap\ldots\cap A_{i_k}| \ .
\label{K6}
\end{equation}
Formula \eqref{K6} is an immediate consequence of
\begin{equation}
(1-\chi_{A_1})\ldots(1-\chi_{A_n})=
\sum_{k=0}^n(-1)^k
\sum_{1\le i_1<i_2<\ldots<i_k\le n}
\chi_{\raisemath{-1pt}A_{\raisemath{-1pt}i_{\raisemath{-1pt}1}}}
\ldots\chi_{\raisemath{-1pt}A_{\raisemath{-1pt}i_k}} \ .
\label{K7}
\end{equation}
Just evaluate both sides of \eqref{K7} at $x$ and sum over $x\in A$.
Formula \eqref{K7} in its turn is essentially the same as \eqref{eq:13},
working with commuting indeterminates $\chi_{A_i}$ in \eqref{K7}
and with commuting indeterminates $\sigma_i$ in \eqref{eq:12}.


\begin{thebibliography}{10}
\bibitem{bala}
V. K. Balakrishnan,
Introductory Discrete Mathematics.
Dover Publications, New York, 1996.

\bibitem{bs71}
J. Bochnak and J. Siciak,
Polynomials and multilinear mappings in topological vector spaces.
Studia Math.  \textbf{39} (1971), 59--76.

\bibitem{mo34}
S. Mazur and W. Orlicz, Grundlegende Eigenschaften der polynomischen
Operationen. Studia Math. \textbf{5} (1934) 50--68.

\bibitem{nelson}
E. Nelson, Probability theory and Euclidean field theory. In: G. Velo
and A. Wightman (Eds.), Constructive Quantum Field Theory. Lecture
Notes in Physics \textbf{25}, pp. 94--124. Springer-Verlag, Berlin-New
York, 1973.

\bibitem{schetzen}
M. Schetzen, The Volterra and Wiener Theories of Nonlinear 
Systems. Wiley, New York, 1980.

\bibitem{wiener58}
N. Wiener, Nonlinear Problems in Random Theory. MIT Press, Cambridge,
MA, 1958.

\bibitem{wiener76}
N. Wiener, Collected Works, Vol. I, P. Masani (Ed.). MIT Press, 
Cambridge, MA, 1976.

\end{thebibliography}
\end{document}